\theoremstyle{oupplain}
\newtheorem{theorem}{Theorem}[section]
\newtheorem{thm}{Theorem}[section]
\newtheorem{lemma}[theorem]{Lemma}
\newtheorem{modlemma}[theorem]{Lemma}
\newtheorem{cor}[theorem]{Corollary}
\theoremstyle{oupdefinition}
\theoremstyle{oupremark}
\theoremstyle{oupproof}
\newtheorem{proof}{Proof}
\numberwithin{equation}{section}
\DeclareMathOperator{\rad}{rad}
\DeclareMathOperator{\li}{li}
\DeclareMathOperator{\ord}{ord}
\let\vectorsym\mathbold
\newcommand{\Z}{\mathbb{Z}}
\newcommand{\R}{\mathbb{R}}
\renewcommand{\b}{{\vectorsym{b}}}
\newcommand{\x}{{\vectorsym{x}}}
\renewcommand{\d}{\,\mathrm{d}}
\renewcommand{\L}{\overline{L}}
\let\brace\relax
\DeclarePairedDelimiter{\brace}{\lbrace}{\rbrace}
\DeclarePairedDelimiter{\abs}{\lvert}{\rvert}
\DeclarePairedDelimiter{\norm}{\lVert}{\rVert}
\DeclarePairedDelimiter{\normo}{\lVert}{\rVert_1}
\DeclarePairedDelimiter{\paren}{\lparen}{\rparen}
\NewDocumentCommand\set{somm}{\IfBooleanTF#1{\brace*{\,#3{}:{}#4\,}}{\brace[#2]{\,#3{}:{}#4\,}}}
\begin{document}

\begin{Frontmatter}

\title{A new lower bound in the $abc$ conjecture}

\author{Curtis Bright}

\authormark{C.~Bright}

\address{\orgname{School of Computer Science, University of Windsor, and Department of Mathematics and Statistics, Carleton University}, \email{cbright@uwindsor.ca}, webpage: \url{www.curtisbright.com}}

\keywords{$abc$ conjecture; good $abc$ examples; $abc$ conjecture lower bound}

\keywords[2020 Mathematics Subject Classification]{11D75, 11H06, 11G50, 11N25}

\abstract{We prove that there exist infinitely
many coprime numbers $a$, $b$, $c$ with $a+b=c$
and $c>\rad(abc)\exp(6.563\sqrt{\log c}/\log\log c)$.  These are
the most extremal examples currently known in the $abc$ conjecture, thereby
providing a new lower bound on the tightest possible form of the conjecture.
Our work builds on that of van Frankenhuysen (1999) who proved the existence of
examples satisfying the above bound with the constant $6.068$ in place of $6.563$.
We show that the constant $6.563$ may be replaced by
$4\sqrt{2\delta/e}$ where~$\delta$ is a constant such that
all unimodular lattices of sufficiently large dimension~$n$ contain a nonzero vector
with $\ell_1$ norm at most $n/\delta$.
}

\end{Frontmatter}

\section{Introduction}
Three natural numbers $a$, $b$, $c$ are said to be an \emph{$abc$ triple}
if they do not share a common factor and satisfy the equation
\[ a + b = c . \]
Informally, the $abc$ conjecture says that
large $abc$ triples cannot be `very composite',
in the sense of $abc$ having a prime factorization containing large powers of small primes.
The \emph{radical} of $abc$ is defined
to be the product of the primes in the prime factorization of $abc$, i.e.,
\[ \rad(abc) \coloneqq \prod_{p\mid abc}p . \]
The $abc$ conjecture then states that $abc$ triples satisfy%
\begin{equation} c = O\paren[\big]{\rad(abc)^{1+\epsilon}} \label{abc} \end{equation}
for every $\epsilon>0$, where the implied big-$O$ constant may depend on $\epsilon$.

Presently, the conjecture is far from being proved;
not a single $\epsilon$ is known for which~\eqref{abc} holds.\footnote{A proof of the $abc$ conjecture is claimed by S.~Mochizuki, but this has not been
accepted by the general mathematical community.~\cite{scholze2018abc}}
The best known upper bound is due to C.~L.~Stewart and K.~Yu~\cite{stewart2001} and says that $abc$ triples satisfy
\[ c = O\paren[\big]{\exp(\rad(abc)^{1/3}(\log\rad(abc))^3)} . \]
On the other hand, Stewart and Tijdeman~\cite{Stewart1986} proved in 1986 that there are infinitly many $abc$ triples
with
\begin{equation} c > \rad(abc)\exp\paren[\big]{\kappa\sqrt{\log c}/\log\log c} \label{abc2} \end{equation}
for all $\kappa<4$.
Such $abc$ triples are exceptional in the sense that their radical is relatively small in comparison to $c$
and they provide a lower bound on the best possible form of~\eqref{abc}.
In 1997, van Frankenhuysen~\cite{VanFrankenhuysen200091} improved this lower bound by showing that~\eqref{abc2}
holds for $\kappa=4\sqrt{2}$, and in 1999 he improved this to $\kappa=6.068$ using a sphere-packing idea credited to H.~W.~Lenstra, Jr.
We improve this further by showing that there are infinitely many $abc$ triples
satisfying~\eqref{abc2} with $\kappa=6.563$.

\section{Preliminaries}
Let $S$ be a set of prime numbers.
An $S$-unit is defined to be a rational number whose numerator and denominator in lowest terms are divisible by only the primes in $S$.
That is, one has
\[ \text{$S$-units} \coloneqq \set[\bigg]{\pm\prod_{p_i\in S}p_i^{e_i}}{e_i\in\Z} . \]
This generalizes the notion of units of $\Z$; in particular, the $\emptyset$-units are $\pm1$.
The \emph{height} of a rational number $p/q$ in lowest terms is $h(p/q)\coloneqq\max\brace{\abs{p},\abs{q}}$.
This provides a convenient way of measuring the `size' of an $S$-unit.
Finally, if $\x=(x_1,\dotsc,x_n)$ is a vector in $\R^n$, we let
\[ \norm{\x}_k \coloneqq \paren[\Big]{\sum_{i=1}^n\abs{x_i}^k}^{1/k} \]
be its standard $\ell_k$ norm.
The existence of exceptional $abc$ triples follows from some basic results in the geometry of numbers
along with estimates for prime numbers provided by the prime number theorem.
In particular, we rely on a result of Rankin~\cite{MR0027296} guaranteeing
the existence of a short nonzero vector in a suitably chosen lattice.
\subsection{The odd prime number lattice}
The result involves in an essential way the \emph{odd prime number lattice} $L_n$ generated by the rows $\b_1$, $\dotsc$, $\b_n$ of the matrix
\[ \begin{bmatrix}\b_1\\\b_2\\\b_3\\\vdots\\\b_n\end{bmatrix} = \begin{bmatrix}
\log3 &  &  &  & & \log3 \\
 & \log5 &  &  & & \log5 \\
 &  & \log7 & & & \log7 \\
 &  & & \ddots & & \vdots \\
 &  & & & \log p_n & \log p_n
\end{bmatrix}
\]
where $p_i$ denotes the $i$th odd prime number.
This lattice has a number of interesting applications.
For example, it is used in Schnorr's factoring algorithm~\cite{Schnorr}
and Micciancio's proof that approximating
the shortest vector to within a constant factor is NP-hard under a randomized reduction~\cite{Micciancio}.
There is an obvious isomorphism between the points of $L_n$ and the positive $\brace{p_1,\dotsc,p_n}$-units given by
\[ \sum_{i=1}^n e_i\b_i\leftrightarrow\prod_{i=1}^n p_i^{e_i} . \]
Furthermore, this relationship works well with a natural notion of size, as shown in the following lemma.
\begin{lemma}\label{heightlem}
$\normo{\x} = 2 \log h\paren{p/q}$ where\/
$\x=\sum_{i=1}^n e_i\b_i$ and\/ $p/q=\prod_{i=1}^n p_i^{e_i}$ is expressed in lowest terms.
\end{lemma}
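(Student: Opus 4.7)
The plan is to write out the coordinates of $\x$ explicitly and split the computation of $\normo{\x}$ into two parts: the contribution of the first $n$ coordinates and the contribution of the last coordinate.

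First I would note that, reading off the rows of the matrix, $\x = \sum_{i=1}^n e_i\b_i$ has coordinates
\[ \x = (e_1\log p_1,\,e_2\log p_2,\,\dotsc,\,e_n\log p_n,\,\textstyle\sum_{i=1}^n e_i\log p_i). \]
Because the $p_i$ are distinct primes, the representation $p/q=\prod p_i^{e_i}$ is automatically in lowest terms, with $\log p = \sum_{e_i>0} e_i\log p_i$ and $\log q = -\sum_{e_i<0} e_i\log p_i$. Therefore the first $n$ coordinates contribute
\[ \sum_{i=1}^n\abs{e_i}\log p_i = \log p + \log q, \]
while the last coordinate contributes $\abs{\log p - \log q}$ to $\normo{\x}$.

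Adding the two contributions gives
\[ \normo{\x} = (\log p + \log q) + \abs{\log p - \log q} = 2\max\brace{\log p,\log q} = 2\log h(p/q), \]
which is the desired identity. The argument is essentially bookkeeping, so there is no real obstacle; the only point requiring a moment of care is verifying that the representation $p/q = \prod p_i^{e_i}$ is in lowest terms, which follows immediately from the distinctness of the primes $p_i$.
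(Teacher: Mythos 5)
Your proof is correct and follows essentially the same route as the paper: decompose the $\ell_1$ norm into the first $n$ coordinates, which sum to $\log p+\log q$, plus the last coordinate $\abs{\log p-\log q}$, yielding $2\max\brace{\log p,\log q}=2\log h(p/q)$. The paper merely phrases the final step via a ``without loss of generality $p\geq q$'' reduction instead of the $\max$ identity.
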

\begin{proof}
Without loss of generality suppose $p\geq q$.  Then
\[ \normo{\x} = \sum_{i=1}^n\abs[\big]{e_i\log p_i} + \abs[\Big]{\sum_{i=1}^n e_i\log p_i} 
= \log p + \log q + \log p - \log q 
= 2\log p \]
as required, since $h(p/q)=p$ by assumption.
\end{proof}%

\subsection{The kernel sublattice}
Let $P$ be the set of positive $\brace{p_1,\dotsc,p_n}$-units, and consider the map $\phi$ reducing the elements of $P$ modulo $2^m$.
Since each $p_1$, $\dotsc$, $p_n$ is odd, $\phi\colon P\to(\Z/2^m\Z)^*$ is well-defined.
The odd prime number lattice $L_n$ has an important sublattice that we call the \emph{kernel sublattice} $L_{n,m}$.  It consists of
those vectors whose associated $\brace{p_1,\dotsc,p_n}$-units lie in the kernel of $\phi$.  Formally, we define
\[ L_{n,m} \coloneqq \set[\bigg]{\sum_{i=1}^n e_i\b_i}{\prod_{i=1}^n p_i^{e_i} \equiv 1\pmod{2^m}} . \]
Figure~\ref{kernelfig} plots the first two coordinates of vectors in the
kernel sublattice for varying~$m$.

\begin{figure}%
\centering
\begin{subfigure}[b]{0.24\textwidth}
\includegraphics[width=\textwidth]{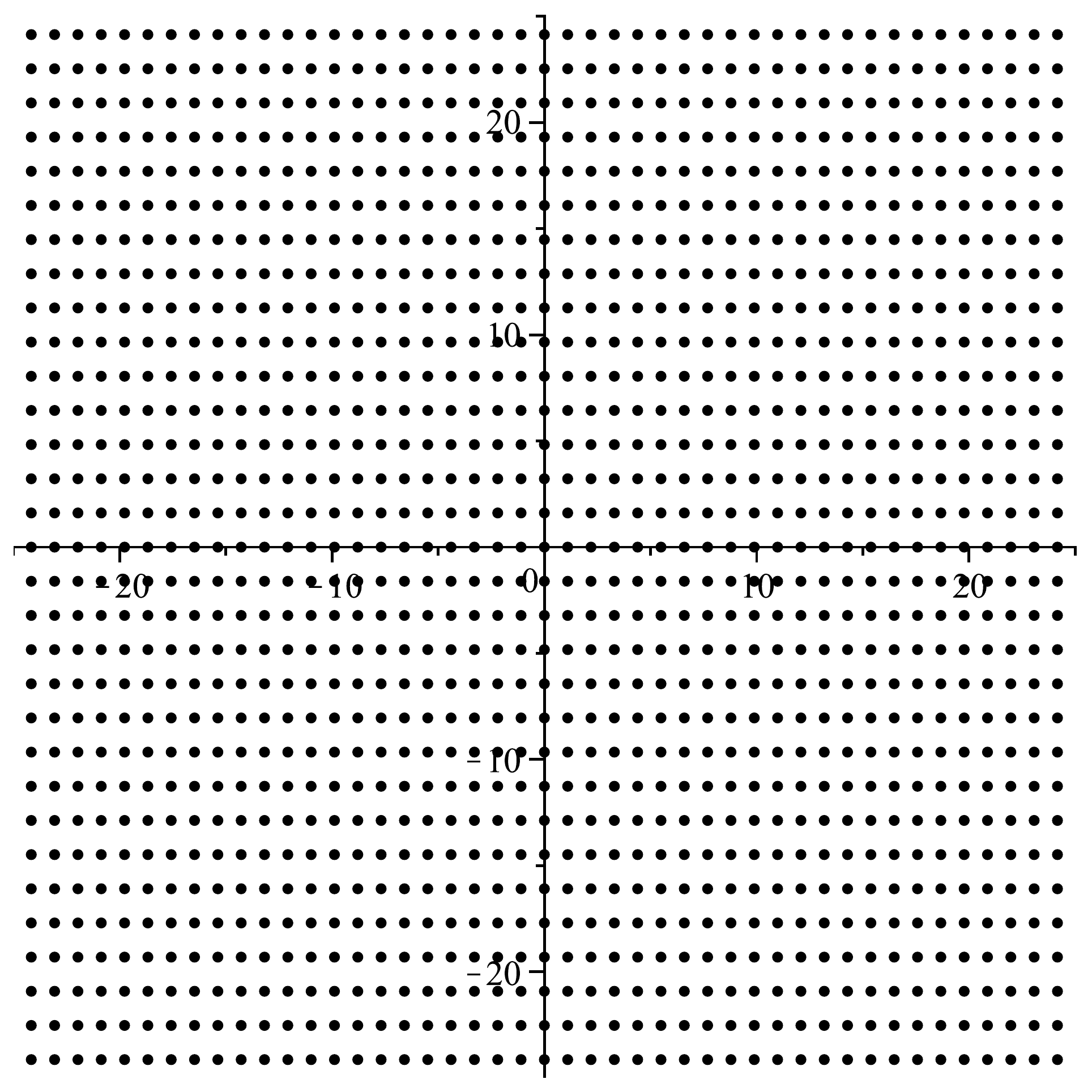}
\end{subfigure}
\begin{subfigure}[b]{0.24\textwidth}
\includegraphics[width=\textwidth]{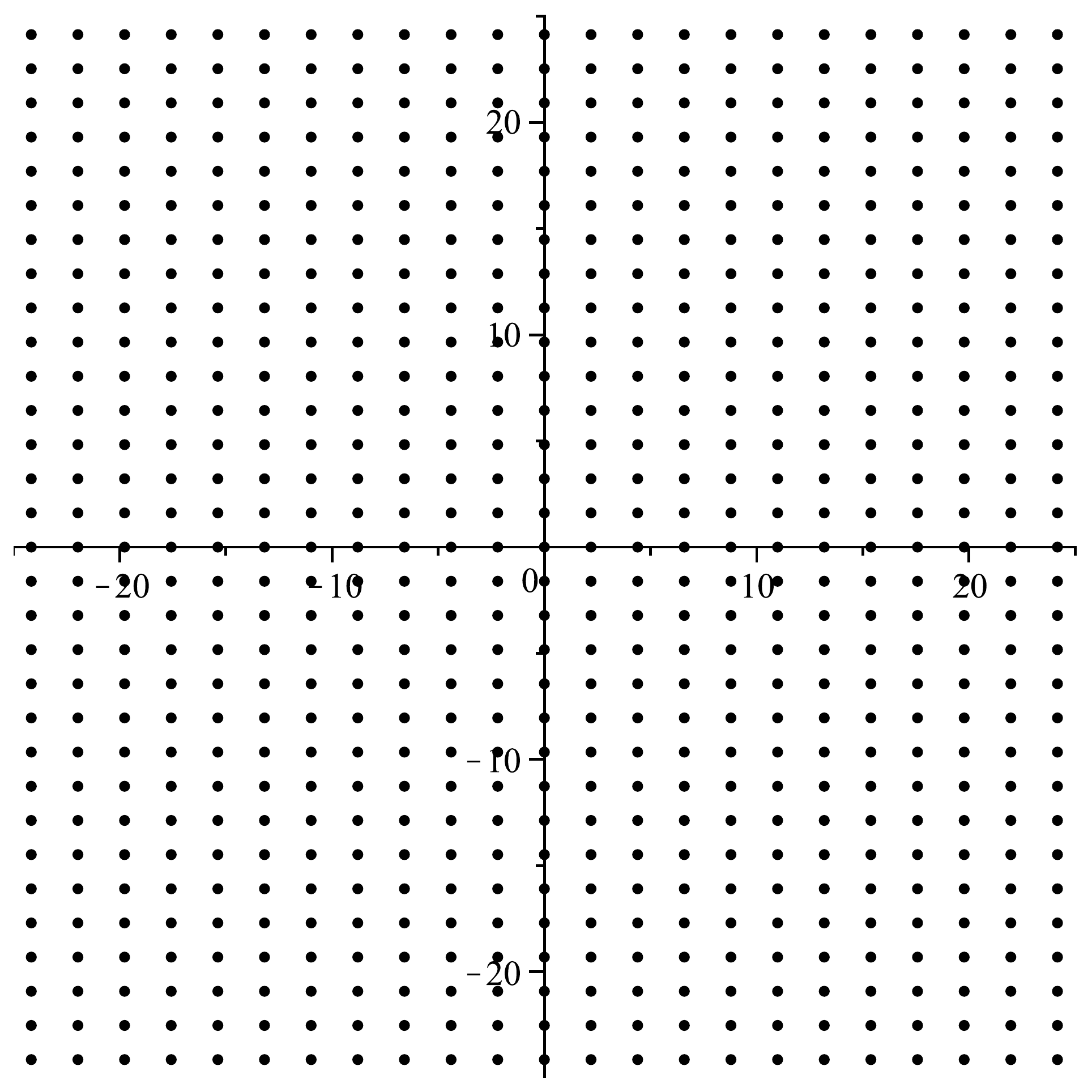}
\end{subfigure}
\begin{subfigure}[b]{0.24\textwidth}
\includegraphics[width=\textwidth]{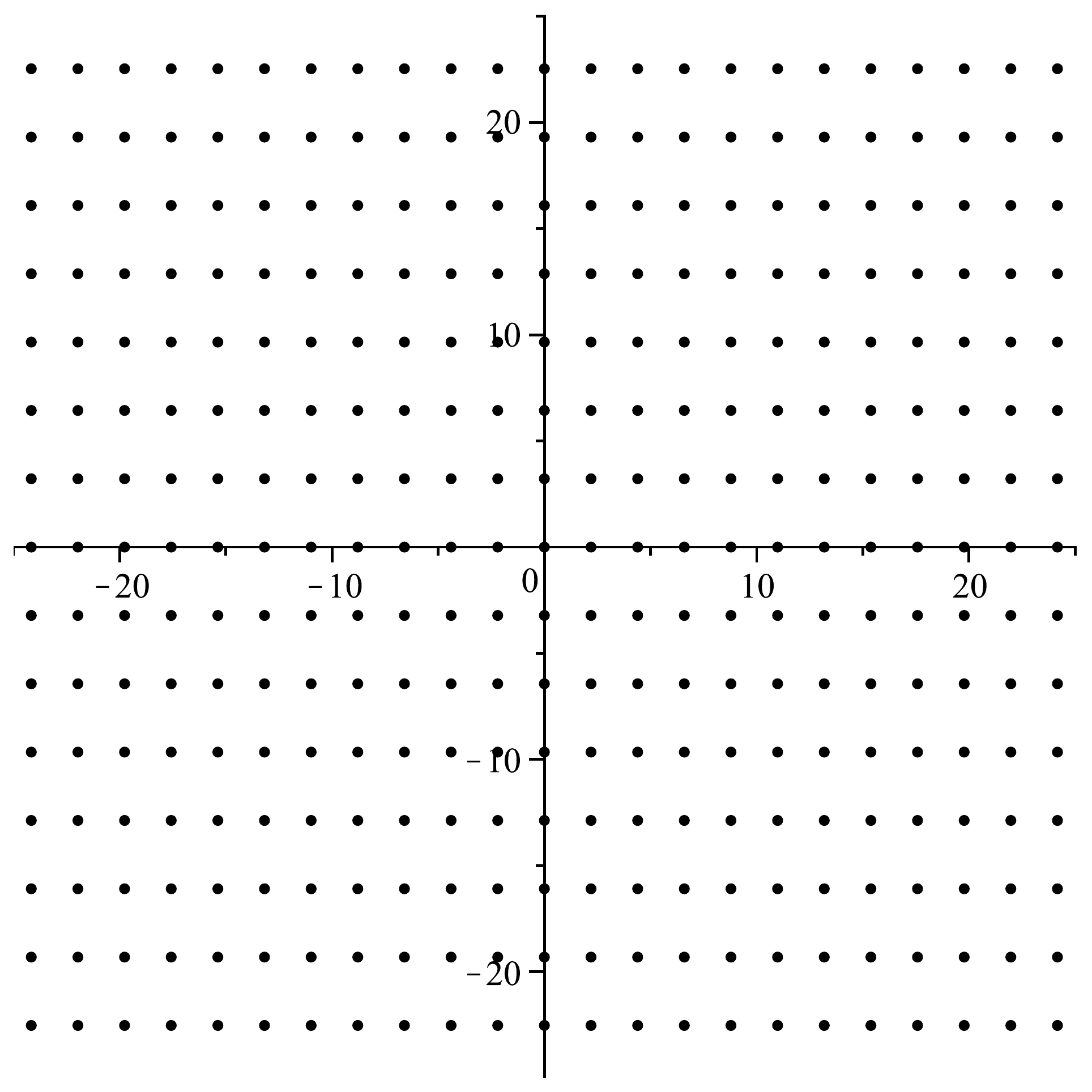}
\end{subfigure}
\begin{subfigure}[b]{0.24\textwidth}
\includegraphics[width=\textwidth]{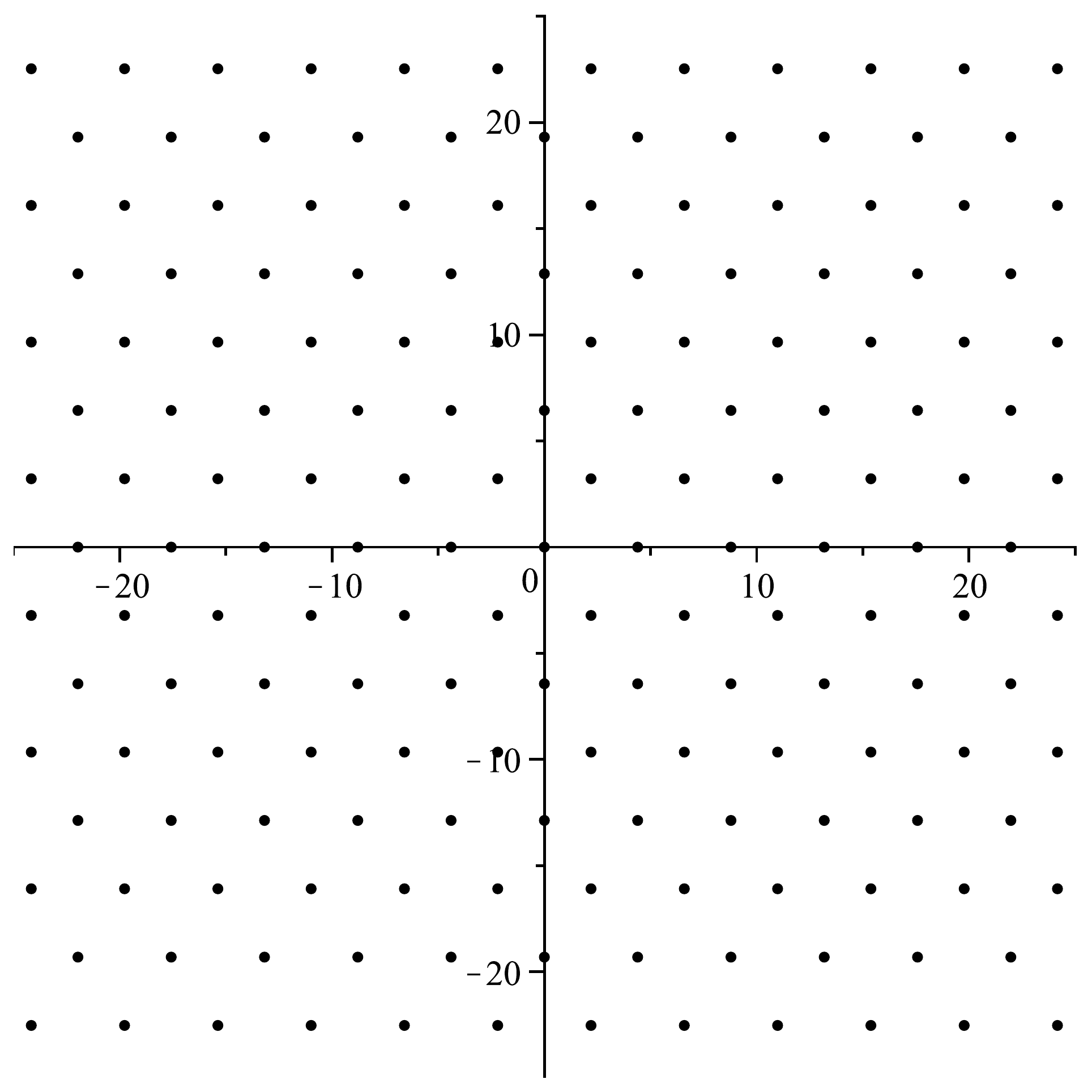}
\end{subfigure}
\begin{subfigure}[b]{0.24\textwidth}
\includegraphics[width=\textwidth]{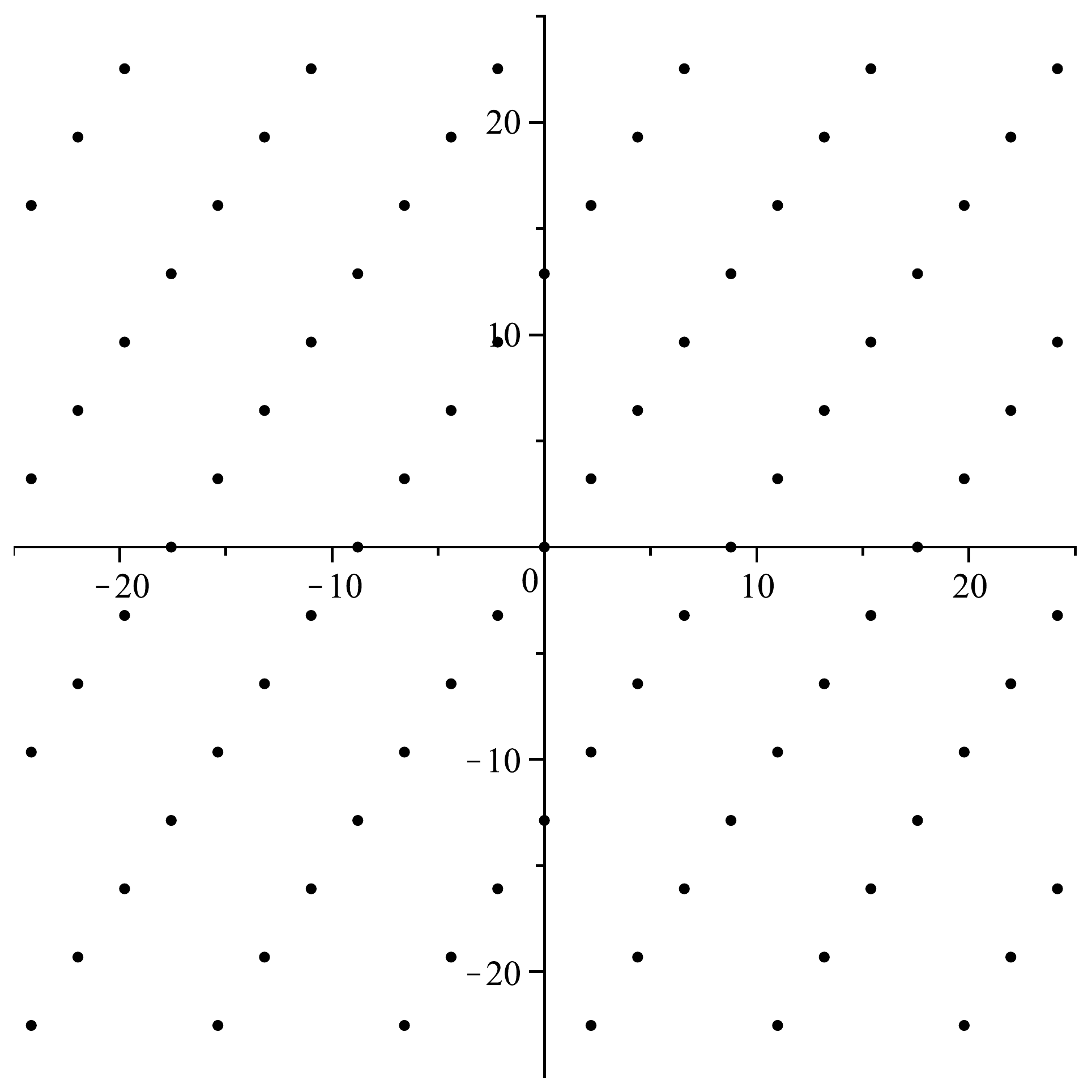}
\end{subfigure}
\begin{subfigure}[b]{0.24\textwidth}
\includegraphics[width=\textwidth]{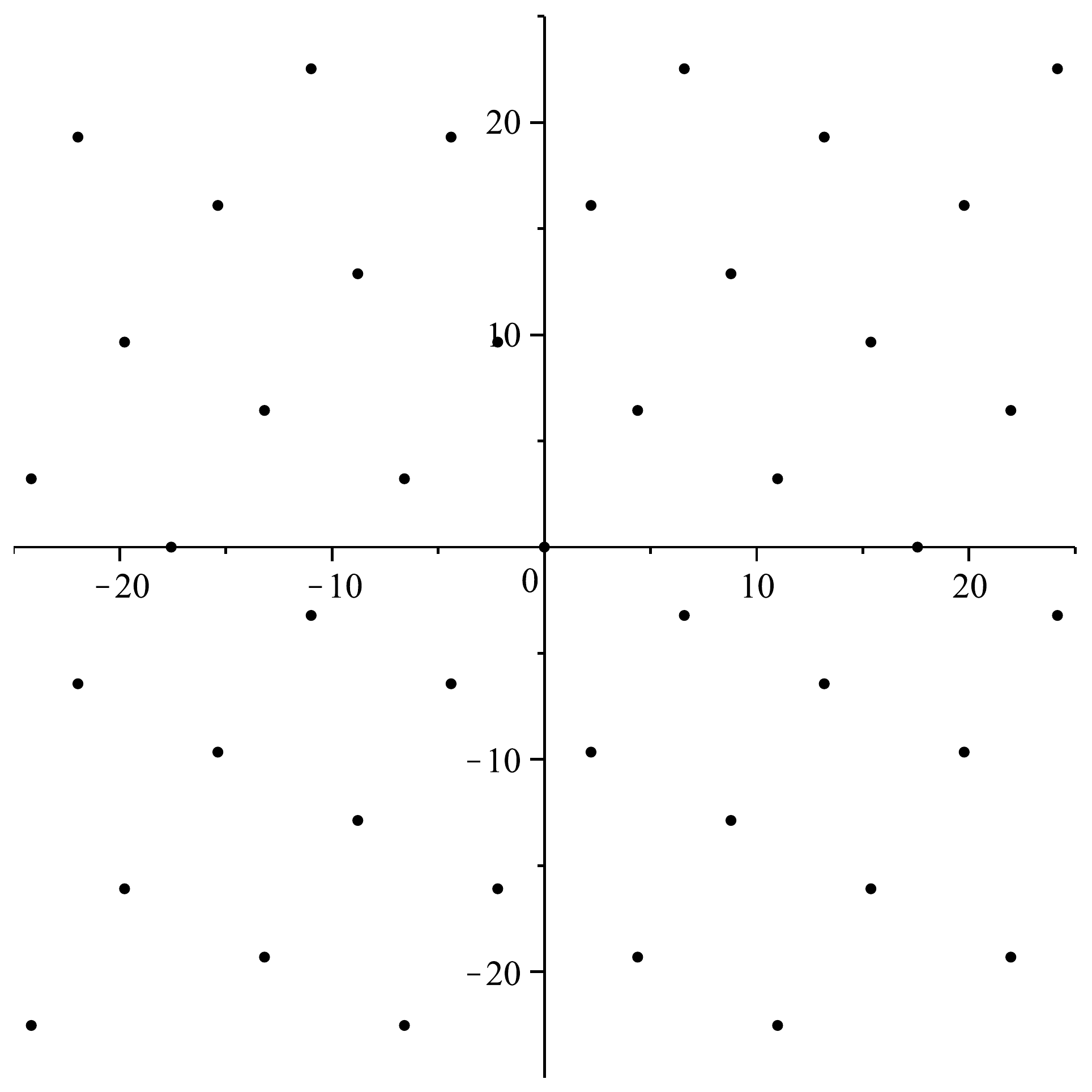}
\end{subfigure}
\begin{subfigure}[b]{0.24\textwidth}
\includegraphics[width=\textwidth]{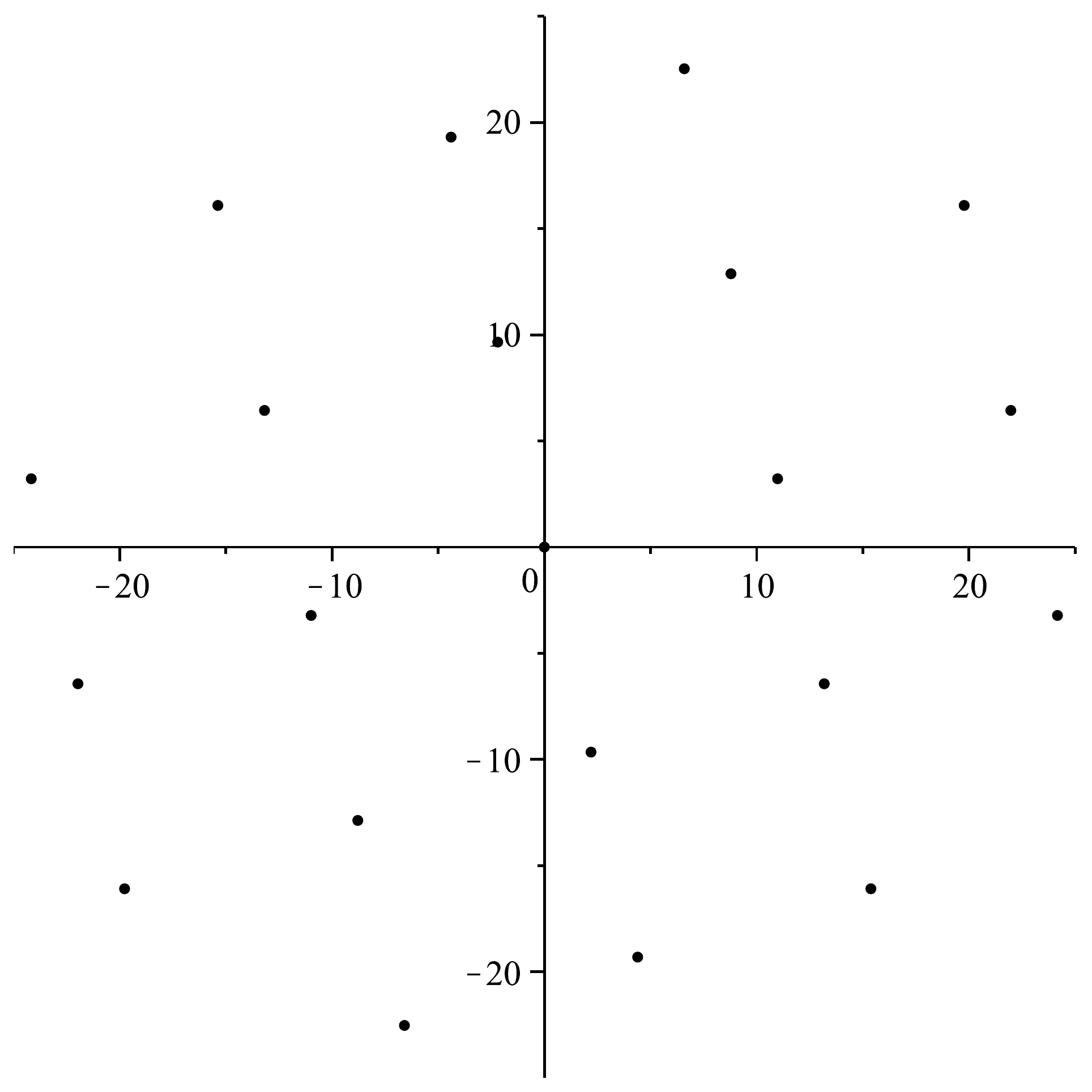}
\end{subfigure}
\begin{subfigure}[b]{0.24\textwidth}
\includegraphics[width=\textwidth]{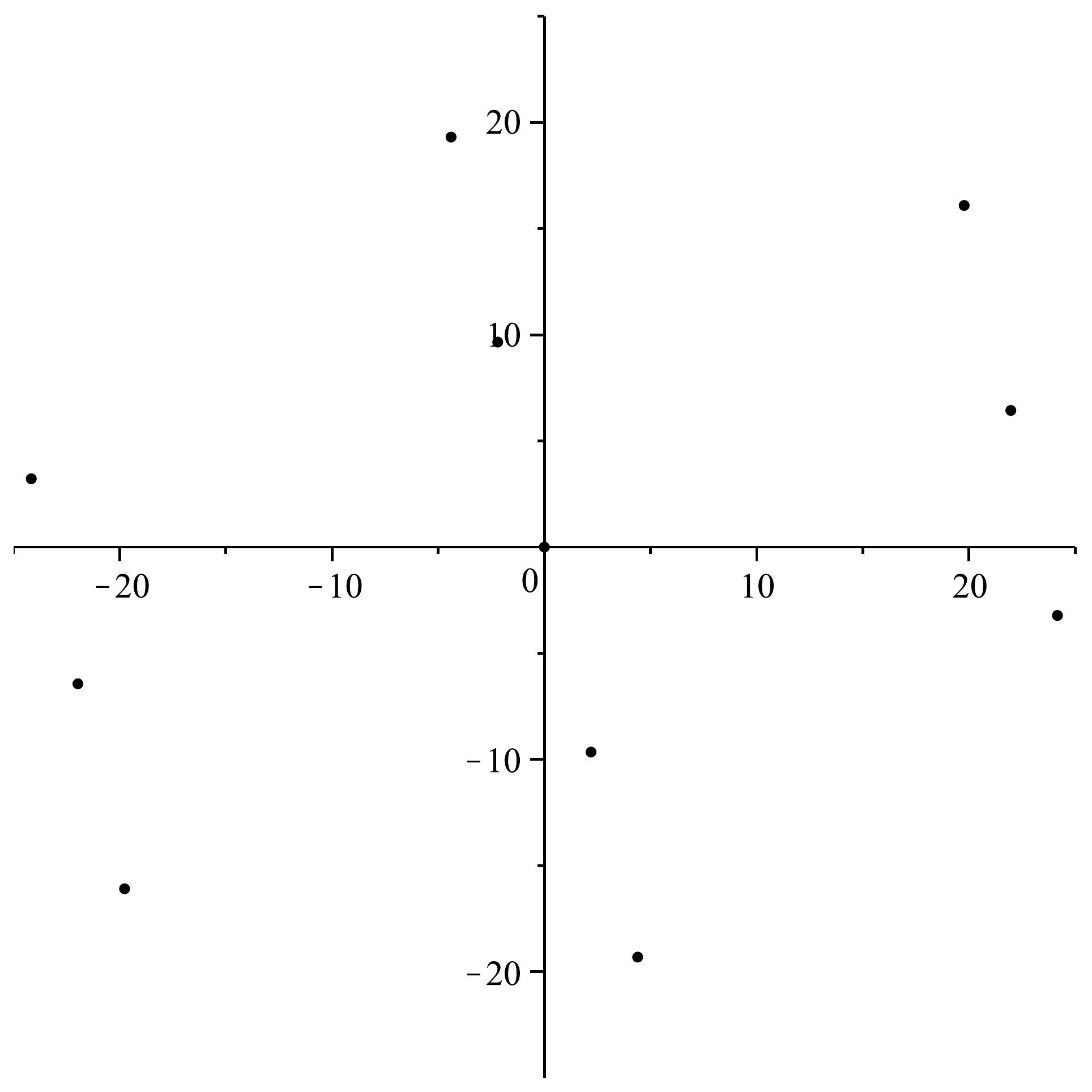}
\end{subfigure}
\caption{Plots of $\set{(x,y)}{(x,y,z)\in L_{2,m}}$ for $1\leq m\leq 8$.}\label{kernelfig}
\end{figure}

\begin{lemma}\label{indexlem}
$L_{n,m}$ is a sublattice of\/ $L_n$ of index\/ $2^{m-1}$ when\/ $n\geq2$.
\end{lemma}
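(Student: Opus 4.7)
My plan is to translate the statement via the group isomorphism $L_n \cong P$ given by $\sum_{i=1}^n e_i\b_i \leftrightarrow \prod_{i=1}^n p_i^{e_i}$. Because each $p_i$ is odd, the reduction map $\phi\colon P \to (\Z/2^m\Z)^*$ defined just before the lemma is a well-defined group homomorphism, and by the definition of $L_{n,m}$ it corresponds under the isomorphism to $\ker\phi$. The first isomorphism theorem then yields
\[ [L_n : L_{n,m}] = \abs{P/\ker\phi} = \abs{\phi(P)}, \]
and since $\abs{(\Z/2^m\Z)^*} = 2^{m-1}$, the lemma reduces to showing that $\phi$ is surjective whenever $n \geq 2$.

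To establish surjectivity I would need only the images of the first two primes $p_1 = 3$ and $p_2 = 5$. The cases $m=1,2$ are immediate (the targets are trivial and $\{\pm 1\}$, respectively). For $m \geq 3$ I would invoke the classical decomposition
\[ (\Z/2^m\Z)^* \cong \langle -1\rangle \times \langle 5\rangle, \]
in which $\langle 5\rangle$ is cyclic of order $2^{m-2}$ and equals the subgroup of residues $\equiv 1\pmod 4$. The image of $p_2 = 5$ spans all of $\langle 5\rangle$ on its own, while $\phi(p_1)=3 \not\equiv 1\pmod 4$ lies outside $\langle 5\rangle$; combining the two therefore generates the entire group of order $2\cdot 2^{m-2} = 2^{m-1}$.

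The only nontrivial input is the fact that $5$ generates the cyclic subgroup $\{a \in (\Z/2^m\Z)^* : a\equiv 1\pmod 4\}$ for every $m \geq 3$; this is standard and follows from the $2$-adic identity $\nu_2(5^{2^k} - 1) = k+2$, proved by a short induction on $k$. The hypothesis $n \geq 2$ is essential: if only $n=1$, the image $\langle 3\rangle \subset (\Z/2^m\Z)^*$ has order $2^{m-2}$ for $m \geq 3$, yielding an index of $2^{m-2}$ instead of $2^{m-1}$. The entire argument is really a repackaging of the structure theorem for $(\Z/2^m\Z)^*$; no deep obstacle is present once the lattice-to-unit translation is in place.
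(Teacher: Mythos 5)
Your proof is correct and follows essentially the same route as the paper: translate $L_{n,m}$ to $\ker\phi$ under $L_n\cong P$, apply the first isomorphism theorem, and use that $3$ and $5$ generate $(\Z/2^m\Z)^*$ when $n\geq 2$. You actually supply more detail than the paper (which simply asserts that $3$ and $5$ generate, and separately notes discreteness and the independent vectors $\ord_{2^m}(p_i)\b_i$ to confirm $L_{n,m}$ is a full-rank sublattice), and your justification of the generation claim via $(\Z/2^m\Z)^*\cong\langle-1\rangle\times\langle5\rangle$ is sound.
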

\begin{proof}
Note that $L_{n,m}$ is discrete and closed under addition and subtraction.
$L_{n,m}$ also contains the $n$ linearly independent vectors $\ord_{2^m}(p_i)\b_i$ for $1\leq i\leq n$, so this demonstrates that $L_{n,m}$ is a full-rank sublattice of $L_n$.

Since $3$ and $5$ generate $(\Z/2^m\Z)^*$, when $n\geq2$ we have $\phi(P)=(\Z/2^m\Z)^*$.
Since $L_n\cong P$ and $L_{n,m}\cong\ker\phi$ it follows that
$L_n/L_{n,m} \cong (\Z/2^m\Z)^*$
by the first isomorphism theorem.  Thus the index of $L_{n,m}$ in $L_n$ is $\abs{(\Z/2^m\Z)^*}=2^{m-1}$.
\end{proof}

\subsection{Hermite's constant}\label{sec:hermite}
The \emph{Hermite constant}\/ $\gamma_n$ is defined to be the smallest positive number such that
every lattice of dimension~$n$ and volume $\det(L)$
contains a nonzero vector~$\x$ with
\[ \norm{\x}^2_2 \leq \gamma_n \det(L)^{2/n} . \]
We are interested in the ``Manhattan distance'' $\ell_1$ norm instead of the usual Euclidean norm, so we define the related constants $\delta_n$ by the
smallest positive number such that every full-rank lattice of dimension $n$ contains a nonzero vector $\x$ with
\[ \normo{\x} \leq \delta_n \det(L)^{1/n} . \]
By Minkowski's theorem~\cite{cassels2012introduction} applied to a generalized octahedron (a `sphere' in the $\ell_1$ norm),
every full-rank lattice of dimension $n$ contains a nonzero lattice point $\x$ with
$\normo{\x} \leq (n!\det(L))^{1/n}$.
It follows that $\delta_n \leq (n!)^{1/n} \sim n/e$,
but better bounds on $\delta_n$ are known. Blichfeldt~\cite{blich36} showed that
\[ \delta_n \leq \sqrt{\frac{4(n+1)(n+2)}{3\pi(n+3)}}\paren[\bigg]{\frac{2(n+1)}{n+3}\paren[\Big]{\frac{n}{2}+1}!}^{1/n} \sim \frac{n}{ \sqrt{1.5\pi e}} , \]
where $x!\coloneqq\Gamma(x+1)$.  Improving this, Rankin~\cite{MR0027296} showed the following.
\begin{lemma}\label{rankinlem}For all integer $n$ and real $x\in[1/2,1]$, we have
\[ \delta_n \leq \paren[\Big]{\frac{2-x}{1-x}}^{x-1}\paren[\Big]{\frac{1+x n}{x}(x n)!}^{1/n}\frac{n^{1-x}}{x!} \sim \paren[\Big]{\frac{2-x}{1-x}}^{x-1}\paren[\Big]{\frac{x}{e}}^x \frac{n}{x!} . \]
\end{lemma}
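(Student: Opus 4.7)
The plan is to invoke Minkowski's convex body theorem for a well-chosen symmetric convex body, then apply the power-mean inequality to translate the resulting norm bound into an $\ell_1$-norm bound. As a first pass, I would use the $\ell_{1/x}$-ball
\[ B_r \coloneqq \set*{y \in \R^n}{\textstyle\sum_{i=1}^n \abs{y_i}^{1/x} \leq r^{1/x}}, \]
whose volume is $(2r\,x!)^n/(xn)!$ by the standard Dirichlet integral. Minkowski then produces a nonzero $v \in L$ with $v \in B_r$ as soon as $r \geq (x!)^{-1}((xn)!\det L)^{1/n}$, and the power-mean inequality $\sum_i \abs{v_i} \leq n^{1-x} \paren[\big]{\sum_i \abs{v_i}^{1/x}}^{x}$ (valid for $x \in (0,1]$) yields
\[ \delta_n \leq \frac{n^{1-x}((xn)!)^{1/n}}{x!}. \]
This reproduces the rightmost factor of the stated bound but misses the correction $((2-x)/(1-x))^{x-1}((1+xn)/x)^{1/n}$, which is strictly less than~$1$ for $x<1$ and represents the genuine improvement of Rankin over the naive $\ell_p$-ball bound.

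To recover the correction, I would refine the body, trading a small amount of volume for a sharper bound on $\normo{v}$. A natural candidate is to intersect $B_r$ with an $\ell_\infty$-slab, or use a body of the form $\set*{y}{\sum_i \abs{y_i}^{1/x} + c\,\norm{y}_\infty^{1/x} \leq r^{1/x}}$, governed by an auxiliary parameter~$c$ to be optimized as a function of~$x$. After the substitution $z_i = \abs{y_i}^{1/x}$, the volume reduces to a Dirichlet integral of $\prod_i z_i^{x-1}$ over a truncated simplex; conditioning on the index of the coordinate achieving the maximum converts it to a one-dimensional integral amenable to the beta function, which is what should produce the exponents $x-1$ and $1-x$ appearing in Rankin's prefactor.

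I expect the main obstacle to be the bookkeeping in this volume computation and the subsequent optimization: keeping track of exponents so that the clean factor $((2-x)/(1-x))^{x-1}$ emerges, and verifying that the finite-$n$ correction $((1+xn)/x)^{1/n}$ comes out correctly from the boundary of the integration region. Once the optimized body is in hand, the rest of the proof is a direct application of Minkowski followed by the same power-mean inequality used above, and the asymptotic form $\sim n\,(x/e)^x ((2-x)/(1-x))^{x-1}/x!$ then follows from Stirling applied to $(xn)!$.
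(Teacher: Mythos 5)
The paper itself does not prove this lemma; it is quoted verbatim from Rankin's paper \cite{MR0027296}, so there is no internal proof to compare yours against. Judged on its own, your proposal contains a genuine gap. The first half --- Minkowski's theorem applied to the $\ell_{1/x}$-ball, followed by the power-mean inequality --- is correct and does give $\delta_n \leq n^{1-x}\paren[\big]{(xn)!}^{1/n}/x!$. But this weaker bound is worthless for the paper's purposes: the permissible value of $\delta$ it yields is $\max_{1/2\leq x\leq 1}(e/x)^x\,x!$, and since the derivative of $\log\paren[\big]{(e/x)^x x!}$ equals $-\log x+\psi(1+x)$, which is positive on $[1/2,1]$, the maximum is attained at $x=1$ and equals $e$ --- i.e.\ exactly Minkowski's octahedron bound, corresponding to van Frankenhuysen's constant $4\sqrt{2}$ rather than the paper's $6.563$. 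The entire improvement that Corollary~\ref{hermlem} extracts (pushing $\delta$ from $e\approx 2.718$ up to $\approx 3.659$ at $x\approx 0.645$) comes from the prefactor $\paren[\big]{\tfrac{2-x}{1-x}}^{x-1}$, which is precisely the part of the lemma you do not establish.

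The second half of your proposal, where that prefactor would have to come from, is a plan rather than a proof. You name a candidate body (adding a term $c\,\norm{y}_\infty^{1/x}$ to the defining inequality) but do not compute its volume, do not carry out the optimization over $c$, and do not check that the outcome is the stated expression, including the finite-$n$ factor $\paren[\big]{(1+xn)/x}^{1/n}$; you yourself flag the ``bookkeeping'' as the expected obstacle, which is another way of saying the essential step is unverified. There is no guarantee that this particular perturbation of the $\ell_{1/x}$-ball is the one Rankin uses or that it produces a beta-integral with exponents $x-1$ and $1-x$ in the advertised combination. To close the gap you would need either to carry out that volume computation in full for a correctly chosen body, or simply to cite Rankin as the paper does.
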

\begin{cor}\label{hermlem}
Let $\delta$ be a constant such that $\delta_n\leq n/\delta+O(\log n)$.
Then a permissible value for $\delta$ is $\max\limits_{1/2\leq x\leq 1}\paren[\big]{\frac{1-x}{2-x}}^{x-1}\paren[\big]{\frac{e}{x}}^x x!\approx 3.65931$.
\end{cor}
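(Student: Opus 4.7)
The plan is to rearrange the asymptotic already displayed in Lemma~\ref{rankinlem} into the form $\delta_n\le n/\delta+O(\log n)$ and then optimize the resulting value of $\delta$ over $x\in[1/2,1]$.

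For any fixed $x\in[1/2,1]$, write
\[ C(x)\coloneqq\paren[\bigg]{\frac{2-x}{1-x}}^{x-1}\paren[\bigg]{\frac{x}{e}}^{x}\frac{1}{x!}, \]
so that Lemma~\ref{rankinlem}'s asymptotic reads $\delta_n\sim C(x)n$. The first substantive step is to verify that this tilde is in fact controlled by an additive error of $O(\log n)$. Applying Stirling's formula gives $(xn)!^{1/n}=(xn/e)^{x}\bigl(1+O(\log n/n)\bigr)$, and one trivially has $((1+xn)/x)^{1/n}=1+O(\log n/n)$, with the implied constants depending on the now-fixed $x$. Substituting these into Rankin's bound produces
\[ \delta_n\le C(x)n+O(\log n). \]

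It follows that for each $x\in[1/2,1]$ the choice $\delta=1/C(x)$ is permissible, and after rearrangement,
\[ \delta=\paren[\bigg]{\frac{1-x}{2-x}}^{x-1}\paren[\bigg]{\frac{e}{x}}^{x}x!. \]
Since larger values of $\delta$ produce stronger bounds of the required shape, the best constant obtainable in this way is the maximum of the right-hand side over $x\in[1/2,1]$, which is precisely the quantity appearing in the corollary.

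The only remaining step is to carry out this maximization. The function $f(x)=((1-x)/(2-x))^{x-1}(e/x)^{x}x!$ is smooth and positive on $[1/2,1]$ with bounded limits at both endpoints ($f(1/2)=\sqrt{3}\sqrt{2e}\,\Gamma(3/2)\approx 3.58$ and $f(1)=e\approx 2.72$), so its maximum is attained; checking the derivative at the endpoints shows the maximum occurs in the interior. The main obstacle is that the critical-point equation $(\log f)'(x)=0$ has no closed-form solution, so it must be solved numerically, yielding the stated value $\approx 3.65931$.
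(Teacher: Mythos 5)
Your proposal is correct and follows essentially the same route as the paper: apply Stirling's formula and the estimate $((1+xn)/x)^{1/n}=1+O((\log n)/n)$ to convert Rankin's bound into $\delta_n\le C(x)n+O(\log n)$, then numerically maximize $1/C(x)$ over $x\in[1/2,1]$. The extra endpoint checks are a harmless addition; nothing is missing.
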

\begin{proof}
Note that $((1+xn)/x)^{1/n}=1+O((\log n)/n)$ and
\[ (x n)!^{1/n} = \paren[\Big]{\sqrt{2\pi xn}\mspace{1.5mu}\paren[\Big]{\frac{xn}{e}}^{xn}\paren[\big]{1+O(n^{-1})}}^{1/n} = \paren[\Big]{\frac{xn}{e}}^x\paren[\big]{1+O\paren[\big]{\tfrac{\log n}{n}}} . \]
Then by Lemma~\ref{rankinlem} it follows that
\[ \delta_n \leq \paren[\Big]{\frac{2-x}{1-x}}^{x-1}\paren[\Big]{\frac{x}{e}}^x \frac{n}{x!} + O(\log n), \]
and the function $x\mapsto\paren[\big]{\frac{1-x}{2-x}}^{x-1}\paren[\big]{\frac{e}{x}}^x x!$ for $1/2\leq x\leq 1$
reaches a maximum of approximately $3.65931$ at $x\approx0.645467$.
\end{proof}

The best possible value $\delta$ can achieve in Corollary~\ref{hermlem} is unknown,
but the Minkowski--Hlawka theorem~\cite{cassels2012introduction} applied to an generalized octahedron
shows that in any dimension~$n$ there is always a full-rank lattice $L$ with
all of its nonzero lattice points $\x$ having $\normo{\x}>(\zeta(n)\mspace{1.5mu}n!\det(L))^{1/n}\mspace{-1.5mu}/2$;
here $\zeta$ is the Riemann zeta function.
It follows that $\delta_n>(\zeta(n)\mspace{1.5mu}n!)^{1/n}\mspace{-1.5mu}/2\sim n/(2e)$, so
we must have $\delta\leq 2e$.

\subsection{A full-rank kernel sublattice}

Since $L_{n,m}\in\R^{n+1}$ is of dimension $n$ (i.e., not full-rank) it is awkward to
use Rankin's result on $L_{n,m}$ directly.
The basis matrix of $L_{n,m}$ cannot simply be rotated to embed it in $\R^n$, since
rotation does not preserve the $\ell_1$ norm.  To circumvent this and work with
a full-rank lattice we adjoin the new basis vector $\b_{n+1}=[0,\dotsc,0,n^3]$
to $L_n$ to form a full-rank lattice $\L_n$ (and similarly a full-rank lattice $\L_{n,m}$).

\begin{lemma}\label{vollem2}
The volume of\/ $\L_{n,m}$ is\/ $2^{m-1}n^3\prod_{i=1}^n\log p_i$ when\/ $n\geq2$.
\end{lemma}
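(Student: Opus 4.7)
The plan is to compute $\det(\L_n)$ directly from the explicit basis matrix, then multiply by the index $[\L_n:\L_{n,m}]$, which will be inherited from Lemma~\ref{indexlem}.

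First I would write out the $(n+1)\times(n+1)$ basis matrix of $\L_n$ obtained by appending $\b_{n+1}=[0,\dotsc,0,n^3]$ as the final row to the basis of $L_n$. With the rows ordered $\b_1,\dotsc,\b_n,\b_{n+1}$, the resulting matrix is upper triangular: the diagonal entries are $\log p_1,\dotsc,\log p_n,n^3$, the last column reads $(\log p_1,\dotsc,\log p_n,n^3)$, and every other entry vanishes. Hence $\det(\L_n) = n^3 \prod_{i=1}^n \log p_i$ at once.

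Next I would verify that $[\L_n:\L_{n,m}] = [L_n:L_{n,m}] = 2^{m-1}$, the second equality being Lemma~\ref{indexlem} (which is why the hypothesis $n\geq 2$ appears in the statement). Since $\b_{n+1}$ is linearly independent from $\b_1,\dotsc,\b_n$ and is adjoined to both sublattices, we have the direct-sum decompositions $\L_n = L_n \oplus \Z\b_{n+1}$ and $\L_{n,m} = L_{n,m} \oplus \Z\b_{n+1}$, so the quotient $\L_n/\L_{n,m}$ is canonically isomorphic to $L_n/L_{n,m}$. Combining this with the standard identity $\mathrm{vol}(\L_{n,m}) = [\L_n:\L_{n,m}]\cdot\det(\L_n)$ yields the desired formula.

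I do not foresee any substantive obstacle. The only delicate point, verifying the index equality, is essentially bookkeeping once the direct-sum decomposition is recorded, and the specific value $n^3$ of the padding constant plays no role in this volume computation beyond ensuring that $\b_{n+1}$ is independent of the hyperplane spanned by $\b_1,\dotsc,\b_n$; its particular size will be exploited only later when controlling $\ell_1$ norms of short vectors in $\L_{n,m}$.
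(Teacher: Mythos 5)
Your proof is correct and follows essentially the same route as the paper: compute $\det(\L_n)=n^3\prod_{i=1}^n\log p_i$ from the upper-triangular basis matrix, then multiply by the index $2^{m-1}$. The only cosmetic difference is that the paper justifies $[\L_n:\L_{n,m}]=2^{m-1}$ by rerunning the argument of Lemma~\ref{indexlem} on the augmented lattice, whereas you transfer the index via the direct-sum decomposition with $\Z\b_{n+1}$; both are valid.
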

\begin{proof}
The basis matrix of $L_n$ adjoined with $\b_{n+1}$ is an upper-triangular matrix, so
$\det(\L_n)=n^3\prod_{i=1}^n\log p_i$.  The index of $\L_{n,m}$ in $\L_n$ is $2^{m-1}$
when $n\geq2$ by the same argument as in Lemma~\ref{indexlem}, so
$\det(\L_{n,m})=2^{m-1}\det(\L_n)$.
\end{proof}

Our choice of $m$ will ultimately be asymptotic to $n\log_2 n$,
and in this case $\det(\L_{n,m})^{1/(n+1)}$ grows slightly more than linearly in~$n$.
\begin{lemma}\label{detlem}
If\/ $m\sim n\log_2 n$ then\/ $\det(\L_{n,m})^{1/(n+1)}=O(n^{1+\epsilon})$ for all\/ $\epsilon>0$.
\end{lemma}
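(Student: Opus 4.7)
The plan is to start from Lemma~\ref{vollem2}, which gives the explicit formula
\[ \det(\L_{n,m}) = 2^{m-1} n^3 \prod_{i=1}^n \log p_i , \]
and then take the $(n+1)$st root and show each of the three factors contributes only a polylogarithmic amount (on the logarithmic scale, a $\log n + O(\log\log n)$ contribution). More precisely, I would pass to logarithms and show
\[ \frac{1}{n+1}\log\det(\L_{n,m}) = \frac{(m-1)\log 2 + 3\log n + \sum_{i=1}^n \log\log p_i}{n+1} , \]
so that it suffices to estimate each summand in the numerator.

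For the first summand, the hypothesis $m\sim n\log_2 n$ immediately gives $(m-1)\log 2 \sim n\log n$, so the first term divided by $n+1$ is $\log n + o(\log n)$. The second summand is trivial: $3\log n/(n+1) = o(1)$. For the third summand, the prime number theorem yields $p_i \sim i\log i$, hence $\log p_i = O(\log i)$ and therefore $\log\log p_i = O(\log\log i) = O(\log\log n)$ uniformly for $2 \leq i \leq n$. Summing and dividing by $n+1$ gives a contribution of size $O(\log\log n)$.

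Combining the three estimates yields
\[ \frac{1}{n+1}\log\det(\L_{n,m}) = \log n + O(\log\log n) , \]
so $\det(\L_{n,m})^{1/(n+1)} = n \cdot \exp(O(\log\log n)) = n\cdot(\log n)^{O(1)}$, which is indeed $O(n^{1+\epsilon})$ for every fixed $\epsilon>0$.

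There is no real obstacle here; the argument is bookkeeping. The one place mild care is needed is in the $\sum\log\log p_i$ estimate, where one must handle the first few (small) primes separately from the tail so that the logarithm is well-defined and to justify the uniform $O(\log\log n)$ bound; any elementary form of the prime number theorem, or even Chebyshev-type bounds giving $p_i = O(i\log i)$, is more than enough.
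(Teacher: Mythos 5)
Your argument is correct and takes essentially the same route as the paper's proof, which likewise starts from Lemma~\ref{vollem2} and bounds the three factors $2^{m-1}$, $n^3$, and $\prod_{i=1}^n\log p_i$ separately (multiplicatively, via $2^{m/n}<n^{1+\epsilon}$, $n^{3/n}=O(1)$, and $(\prod_{i=1}^n\log p_i)^{1/n}\leq\log p_n=O(\log n)$, rather than after taking logarithms). One small caveat: the hypothesis $m\sim n\log_2 n$ only gives $(m-1)\log 2/(n+1)=\log n+o(\log n)$, so your intermediate claim $\det(\L_{n,m})^{1/(n+1)}=n\cdot(\log n)^{O(1)}$ is stronger than what is justified; the correct bound $n^{1+o(1)}$ still yields $O(n^{1+\epsilon})$ for every fixed $\epsilon>0$, which is all the lemma asserts.
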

\begin{proof}%
Lemma~\ref{vollem2} implies
$\det(\L_{n,m})^{1/(n+1)} < 2^{m/n} n^{3/n} \paren[\big]{\prod_{i=1}^n\log p_i}^{1/n}$.
Note that $m/n=\log_2 n + o(\log_2 n)<(1+\epsilon)\log_2 n$ for all $\epsilon>0$ and sufficiently large $n$.
Thus $2^{m/n}<n^{1+\epsilon}$ for sufficiently large~$n$, and
the remaining factors are $O(n^\epsilon)$ since $n^{3/n}=O(1)$ and $\paren[\big]{\prod_{i=1}^n\log p_i}^{1/n}<\log p_n = O(\log n)$.
\end{proof}

Finally, we will require the fact that any vector in $\L_n$ including a nontrivial coefficient on
$\b_{n+1}$ must be sufficiently large (have length at least $n^3$ in the $\ell_1$ norm).

\begin{lemma}\label{sizelem}
If\/ $\x=\sum_{i=1}^{n+1}e_i\b_i$ then\/ $\normo{\x}\geq n^3\abs{e_{n+1}}$.
\end{lemma}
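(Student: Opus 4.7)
The plan is to simply write out the coordinates of $\x$ explicitly and apply the triangle inequality to the last coordinate. Recalling the basis vectors, $\b_i$ has $\log p_i$ in position $i$ and position $n+1$ for $1\leq i\leq n$, while $\b_{n+1}$ has $n^3$ in position $n+1$ and zeros elsewhere. Therefore $\x=\sum_{i=1}^{n+1}e_i\b_i$ has coordinate $e_i\log p_i$ in position $i$ for $1\leq i\leq n$, and its last coordinate is $\sum_{i=1}^n e_i\log p_i + e_{n+1}n^3$.

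I would then write out the $\ell_1$ norm as
\[ \normo{\x} = \sum_{i=1}^n\abs[\big]{e_i\log p_i} + \abs[\Big]{\sum_{i=1}^n e_i\log p_i + e_{n+1}n^3} . \]
Applying the reverse triangle inequality to the last term yields
\[ \abs[\Big]{\sum_{i=1}^n e_i\log p_i + e_{n+1}n^3} \geq \abs{e_{n+1}}n^3 - \sum_{i=1}^n\abs[\big]{e_i\log p_i} , \]
and adding this to $\sum_{i=1}^n \abs{e_i\log p_i}$ produces the desired cancellation, leaving $\normo{\x}\geq n^3\abs{e_{n+1}}$.

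Since the argument is essentially a one-line application of the triangle inequality, there is no real obstacle; the content of the lemma lies in the careful choice of the coefficient $n^3$ in the adjoined basis vector $\b_{n+1}$, which ensures that any vector of $\L_n$ with $e_{n+1}\neq 0$ is driven outside the region in which Rankin's bound will later be invoked on vectors of $\L_{n,m}\setminus L_{n,m}$.
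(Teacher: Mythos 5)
Your proof is correct and is essentially the same as the paper's: both write out the $\ell_1$ norm as $\sum_{i=1}^n\abs{e_i\log p_i}$ plus the absolute value of the last coordinate and then invoke the triangle inequality on that last term. The only difference is presentational — you argue directly via the reverse triangle inequality, while the paper phrases the same computation as a proof by contradiction — which is not a substantive distinction.
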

\begin{proof}
We have $\normo{\x}=\sum_{i=1}^n\abs{e_i}\log p_i+\abs[\big]{\sum_{i=1}^n e_i\log p_i+e_{n+1}n^3}$.

Without loss of generality suppose that $e_{n+1}>0$ and for contradiction suppose $\normo{\x}<n^3 e_{n+1}$.  Then
\[ \sum_{i=1}^n e_i\log p_i+e_{n+1}n^3 \leq \abs[\Big]{\sum_{i=1}^n e_i\log p_i+e_{n+1}n^3} < n^3 e_{n+1} - \sum_{i=1}^n \abs{e_i}\log p_i \]
implies $\sum_{i=1}^n (e_i+\abs{e_i}) \log p_i < 0$, and this is nonsensical since the left-hand side is nonnegative.
\end{proof}

\subsection{Asymptotic formulae}
Let $x\coloneqq p_n$ and let $\pi(x)$ be the prime counting function, so that $n=\pi(x)-1$.
The prime number theorem~\cite{ingham1990distribution} states that $\pi(x)\sim\li(x)$
where $\li(x)$ is the logarithmic integral $\int_0^x\frac{\d t}{\log t}$ with asymptotic expansion
\begin{equation} \li(x) = \frac{x}{\log x} + \frac{x}{\log^2 x} + \frac{2x}{\log^3 x} + O\paren[\Big]{\frac{x}{\log^4 x}} . \label{pnt} \end{equation}
In fact, the error term $\pi(x)-\li(x)$ is $O(x/\exp(C\log^{1/2} x))$ for some constant $C>0$.
The following estimates are consequences of this
(cf.~\cite[Lemma 2]{Stewart1986}).
For the convenience of the reader, proofs are given in the appendix.

\begin{lemma}\label{loglem}
$\sum_{i=1}^n\log p_i = n\log p_n - n - p_n/\log^2 p_n + O(p_n/\log^3 p_n)$.
\end{lemma}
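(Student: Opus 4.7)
The plan is to reduce the statement to a question about the first Chebyshev function $\theta(x) \coloneqq \sum_{p \le x}\log p$, via the identity
\[ \sum_{i=1}^n\log p_i = \theta(p_n) - \log 2, \]
which holds because $p_1 = 3$ is the first \emph{odd} prime. Thus it suffices to prove that both $\theta(p_n)$ and $n\log p_n - n - p_n/\log^2 p_n$ are equal to $p_n + O(p_n/\log^3 p_n)$.

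For the first estimate, I would apply Abel summation to obtain
\[ \theta(x) = \pi(x)\log x - \int_2^x \frac{\pi(t)}{t}\,\mathrm{d}t, \]
then substitute $\pi(t) = \li(t) + E(t)$ with the strong error bound $E(t) = O(t/\exp(C\sqrt{\log t}))$ recalled in the text. Using the elementary antiderivative $\int \li(t)/t\,\mathrm{d}t = \li(t)\log t - t$ (integration by parts on $\li'(t) = 1/\log t$), the dominant $\li$ terms cancel to produce $\pi(x)\log x - \int \li(t)/t\,\mathrm{d}t = x + O(1)$. The contribution of $E$ is at most $E(x)\log x + \int_2^x |E(t)|/t\,\mathrm{d}t = O(x/\exp(C'\sqrt{\log x}))$ for some $C'>0$, which is $o(x/\log^k x)$ for every $k$. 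Hence $\theta(x) = x + O(x/\log^3 x)$, and in particular $\theta(p_n) = p_n + O(p_n/\log^3 p_n)$.

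For the second estimate, from $n = \pi(p_n) - 1$ together with expansion~\eqref{pnt} (taken to one extra order, which the strong PNT error justifies), we have
\[ n = \frac{p_n}{\log p_n} + \frac{p_n}{\log^2 p_n} + \frac{2 p_n}{\log^3 p_n} + O\!\paren[\Big]{\frac{p_n}{\log^4 p_n}}. \]
Multiplying by $\log p_n$ gives $n\log p_n = p_n + p_n/\log p_n + 2 p_n/\log^2 p_n + O(p_n/\log^3 p_n)$. Subtracting $n$ kills the $p_n/\log p_n$ term and reduces the $\log^2$ coefficient from $2$ to $1$, and subtracting a further $p_n/\log^2 p_n$ leaves exactly $p_n + O(p_n/\log^3 p_n)$, as required.

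The main obstacle is purely bookkeeping: three successive orders of the $\li$ expansion have to be tracked through both computations so that the cancellations produce the precise error $O(p_n/\log^3 p_n)$. No new ideas beyond partial summation and the prime number theorem with its exponential error term are needed, so the argument is routine once the split $\sum\log p_i = \theta(p_n) - \log 2$ is made.
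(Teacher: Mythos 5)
Your proof is correct and follows essentially the same route as the paper: both arguments reduce the claim to the estimate $\theta(x)=\sum_{p\le x}\log p=x+O(x/\log^3 x)$ together with the asymptotic expansion of $\li$, the only difference being that you derive the $\theta$ estimate from the strong PNT error term via Abel summation while the paper simply cites it as an ``alternate form of the prime number theorem.'' Your algebraic bookkeeping (multiplying the expansion of $n$ by $\log p_n$ and subtracting) is equivalent to the paper's rearrangement of $n=\li(x)+O(x/\log^4 x)$, and your explicit handling of the $-\log 2$ from the omitted prime $2$ is a detail the paper silently absorbs into the error term.
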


\begin{lemma}\label{logloglem}
$\sum_{i=1}^n\log\log p_i = n\log\log p_n - p_n/\log^2 p_n + O(p_n/\log^3 p_n)$.
\end{lemma}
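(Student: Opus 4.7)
The plan is to apply Abel summation together with the asymptotic expansion~\eqref{pnt} for $\pi(x)$. First I would extend the sum to include the prime $2$ by writing $\sum_{i=1}^n \log\log p_i = \sum_{p\leq p_n}\log\log p - \log\log 2$, where the sum on the right is over all primes; the constant $-\log\log 2$ will be absorbed into the error term at the end. Since $\log\log t$ has derivative $1/(t\log t)$, Abel summation gives
\[ \sum_{p\leq x}\log\log p = \pi(x)\log\log x - \int_2^x \frac{\pi(t)}{t\log t}\d t . \]

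Next I would substitute the estimate $\pi(t)/(t\log t) = 1/\log^2 t + O(1/\log^3 t)$, which follows from~\eqref{pnt}, and evaluate the resulting integral. A standard integration by parts (applied twice) gives $\int_2^x \d t/\log^2 t = x/\log^2 x + O(x/\log^3 x)$, and similarly $\int_2^x \d t/\log^3 t = O(x/\log^3 x)$. Combining these,
\[ \int_2^x \frac{\pi(t)}{t\log t}\d t = \frac{x}{\log^2 x} + O\paren[\Big]{\frac{x}{\log^3 x}} . \]

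Finally I would set $x = p_n$ and use $\pi(p_n) = n+1$ to obtain
\[ \sum_{i=1}^n\log\log p_i = (n+1)\log\log p_n - \frac{p_n}{\log^2 p_n} - \log\log 2 + O\paren[\Big]{\frac{p_n}{\log^3 p_n}} , \]
and then observe that both $\log\log p_n$ and $-\log\log 2$ are $O(p_n/\log^3 p_n)$ for large $n$ and can be absorbed into the error term, yielding the claim. The argument closely parallels that of Lemma~\ref{loglem}, and nothing genuinely difficult arises; the only real point needing care is verifying that the lower-order terms in the expansion of $\pi(t)$ contribute only $O(x/\log^3 x)$ after integration, which is immediate from the integration-by-parts calculations above.
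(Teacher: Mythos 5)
Your proof is correct and follows essentially the same route as the paper's: Abel summation with $f=\log\log$, the prime number theorem estimate for $\pi(t)$, and the evaluations $\int_2^x \d t/\log^2 t = x/\log^2 x + O(x/\log^3 x)$ and $\int_2^x \d t/\log^3 t = O(x/\log^3 x)$ (for the latter the paper splits the integral at $\sqrt{x}$, which is the cleanest justification). The only cosmetic difference is that you sum over all primes and subtract $\log\log 2$, so the boundary term is $(n+1)\log\log p_n$, whereas the paper restricts the Abel weights to odd primes and gets $n\log\log p_n$ directly; both bookkeeping choices are fine since the discrepancy $\log\log p_n + \abs{\log\log 2}$ is absorbed by the error term.
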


\section{Exceptional \boldmath{$abc$} triples}
For our purposes the importance of the kernel sublattice is that it lets us show the existence of $abc$ triples in which $c$ is large relative to $\rad(abc)$.
The following lemma shows how this may be done.
\begin{lemma}\label{abclem}
For all\/ $m\lesssim n\log_2 n$ and sufficiently large\/ $n$, there exists an\/ $abc$ triple satisfying
\[ \frac{2^{m-1}}{\prod_{i=1}^n p_i}\rad(abc) \leq c \quad\text{and}\quad 2 \log c \leq \frac{n+O(\log n)}{\delta} \paren[\Big]{2^{m-1}n^3\prod_{i=1}^n\log p_i}^{1/(n+1)} . \]
\end{lemma}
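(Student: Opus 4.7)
The plan is to apply Corollary~\ref{hermlem} to the full-rank lattice $\L_{n,m}$ of dimension $n+1$ to produce a nonzero $\x\in\L_{n,m}$ whose $\ell_1$ norm is at most
\[ \frac{n+O(\log n)}{\delta}\det(\L_{n,m})^{1/(n+1)} = \frac{n+O(\log n)}{\delta}\paren[\Big]{2^{m-1}n^3\prod_{i=1}^n\log p_i}^{1/(n+1)}, \]
using Lemma~\ref{vollem2} to evaluate the determinant. The second inequality in the statement will drop out as soon as I can identify $\normo{\x}$ with $2\log c$ for a suitable $c$.

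Writing $\x=\sum_{i=1}^{n+1}e_i\b_i$, the key step---and really the only substantive obstacle---is to rule out $e_{n+1}\neq 0$. Lemma~\ref{sizelem} gives $\normo{\x}\geq n^3\abs{e_{n+1}}$, while Lemma~\ref{detlem} makes the right-hand side above $O(n^{2+\epsilon})$ for any $\epsilon>0$ (since $m\lesssim n\log_2 n$); for sufficiently large $n$ this forces $e_{n+1}=0$, so in fact $\x\in L_{n,m}$. This is precisely why $\b_{n+1}$ was scaled by $n^3$: comfortably beyond the size of the short vector guaranteed by Rankin's bound.

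With $\x\in L_{n,m}$, it corresponds via the isomorphism of Section~2 to a nontrivial positive $\brace{p_1,\dotsc,p_n}$-unit $r=\prod_{i=1}^n p_i^{e_i}\equiv 1\pmod{2^m}$. Writing $r=p/q$ in lowest terms, I set $c\coloneqq\max\brace{p,q}$, $a\coloneqq\min\brace{p,q}$, and $b\coloneqq c-a$. Then $(a,b,c)$ is an $abc$ triple, since $\gcd(a,b)=\gcd(p,q)=1$, and the congruence $p\equiv q\pmod{2^m}$ forces $2^m\mid b$. Applying Lemma~\ref{heightlem} gives $\normo{\x}=2\log h(p/q)=2\log c$, which is the second required inequality.

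For the first inequality, $a$ and $c$ are coprime positive $\brace{p_1,\dotsc,p_n}$-units, so $\rad(ac)$ divides $\prod_{i=1}^n p_i$; meanwhile $\rad(b)\leq 2\cdot(b/2^m)\leq 2c/2^m$ because $2^m\mid b$ and $b<c$. Multiplying these two estimates (using pairwise coprimality of $a$, $b$, $c$) yields $\rad(abc)\leq (c/2^{m-1})\prod_{i=1}^n p_i$, which rearranges to $c\geq(2^{m-1}/\prod_{i=1}^n p_i)\rad(abc)$, as required.
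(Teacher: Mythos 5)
Your proposal is correct and follows essentially the same route as the paper: Rankin's bound via Corollary~\ref{hermlem} applied to $\L_{n,m}$, Lemmas~\ref{sizelem} and~\ref{detlem} to force $e_{n+1}=0$, Lemma~\ref{heightlem} to identify $\normo{\x}$ with $2\log c$, and the congruence modulo $2^m$ to bound the radical. The only (immaterial) difference is that you swap the labels of $a$ and $b$ relative to the paper, making $b$ rather than $a$ the term divisible by $2^m$.
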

\begin{proof}
By the definition of $\delta$ from Corollary~\ref{hermlem}, for all sufficiently large~$n$ there exists a nonzero $\x\in\L_{n,m}$ with
\begin{equation} \normo{\x} \leq \paren[\Big]{\frac{n+1}{\delta}+O(\log n)} \det\paren{\L_{n,m}}^{1/(n+1)}
. \label{star} \end{equation}
Say $\x=\sum_{i=1}^{n+1} e_i\b_i$.  For sufficiently large $n$ we must have $e_{n+1}=0$, since
by Lemma~\ref{sizelem} if $e_{n+1}\neq0$ then $\normo{\x}\geq n^3$.  This would contradict~\eqref{star}
since by Lemma~\ref{detlem} the right-hand side is $O(n^{2+\epsilon})$.

Let $\prod_{i=1}^n p_i^{e_i}=p/q$ be expressed in lowest terms.
By construction of the kernel sublattice, we have that
$p/q \equiv 1 \pmod{2^m}$.
Let $c\coloneqq h(p/q)=\max\brace{p,q}$, $b\coloneqq\min\brace{p,q}$, and $a\coloneqq c-b$, so that $a$, $b$, $c$ form an $abc$ triple.  Furthermore, we see that
\[ c \equiv b \pmod{2^m} \]
so that $c=b+k2^m$ for some positive integer $k\leq c/2^m$.  Note $a$ is divisible by~$2$ and any other prime that divides it also divides $k$, so that $\rad(a)\leq2k\leq c/2^{m-1}$.
Furthermore, by construction of $b$ and $c$, $\rad(bc)\leq\prod_{i=1}^n p_i$
and the first bound follows.
The second bound follows from~\eqref{star} and Lemmas~\ref{heightlem} and~\ref{vollem2}.
\end{proof}

\subsection{Optimal choice of \boldmath{$m$}}
The first bound in Lemma~\ref{abclem} allows us to show the existence of infinitely many
$abc$ triples whose ratio of $c$ to $\rad(abc)$ grows arbitrarily large.
Using the second bound, we can even show that this ratio grows faster than a function of $c$.
It is not immediately clear how to choose $m$ optimally, i.e., to maximize the ratio $c/\rad(abc)$.

For convenience, let $R$ denote the right-hand side of the second inequality in Lemma~\ref{abclem} with $l_n\coloneqq O(\log n)$.
Then $2^{m-1}=\paren[\big]{\frac{\delta R}{n+l_n}}^{n+1}/(n^3\prod_{i=1}^n\log p_i)$, so the bounds of Lemma~\ref{abclem} can be rewritten in terms of $R$:
\begin{equation}
\frac{(\delta R/(n+l_n))^{n+1}}{n^3\prod_{i=1}^n p_i\log p_i}\rad(abc) \leq c \quad\text{and}\quad 2\log c \leq R . \label{abcfirst}
\end{equation}
The question now becomes how to choose $R$ in terms of~$n$ so that $c/\rad(abc)$ is maximized.

Taking the logarithm of the first inequality in~\eqref{abcfirst} gives
\[ (n+1)\log\paren[\Big]{\frac{\delta R}{n+l_n}} - 3\log n - \sum_{i=1}^n \log p_i - \sum_{i=1}^n \log\log p_i + \log\rad(abc) \leq \log c . \]
Using the asymptotic formulae in Lemmas~\ref{loglem} and~\ref{logloglem} with $\log(n+l_n)=\log n+O(l_n/n)$, this becomes
\begin{equation} n\log\paren[\Big]{\frac{e\delta R}{np_n\log p_n}} + \frac{2p_n}{\log^2 p_n} + O\paren[\Big]{\frac{p_n}{\log^3 p_n}} + \log\rad(abc) \leq \log c . \label{five} \end{equation}
By the prime number theorem $n=\li(p_n)+O(p_n/\log^2 p_n)$ and~\eqref{pnt}
the leftmost term becomes
\[ n\log\paren[\bigg]{\frac{e\delta R}{p_n^2\paren[\big]{1+1/\log p_n+O(1/\log^2 p_n)}}} , \]
and with $\log(1+1/x)=1/x+O(1/x^2)$ as $x\to\infty$, this is
\[ n\log\paren[\Big]{\frac{e\delta R}{p_n^2}} - \frac{n}{\log p_n} + O\paren[\Big]{\frac{n}{\log^2 p_n}}
. \]
Using~\eqref{pnt} again on the last two terms and putting this back into~\eqref{five}, we get
\begin{equation} n\log\paren[\Big]{\frac{e\delta R}{p_n^2}} + \frac{p_n}{\log^2 p_n} + O\paren[\Big]{\frac{p_n}{\log^3 p_n}} + \log\rad(abc) \leq \log c , \label{six} \end{equation}
and our goal becomes to choose $R$ as a function of $n$ to maximize $n\log(e\delta R/p_n^2)$.
Choosing $R$ as asymptotically slow-growing as possible in terms of~$n$ will maximize this in terms of~$R$.
We must take $R>p_n^2/(e\delta)$ for the logarithm to be positive, so we take
$R\coloneqq kp_n^2$ for some constant $k$.
Note that with this choice $m\sim n\log_2 n$, so Lemma~\ref{abclem} applies.  We have that
$n\log(e\delta R/p_n^2)$ simplifies to
\[ n\log(e\delta k) \sim \frac{p_n}{\log p_n}\log(e\delta k) = \frac{\sqrt{R/k}}{\log\sqrt{R/k}}\log(e\delta k) \sim \frac{2\sqrt{R/k}}{\log R}\log(e\delta k) . \]

For fixed $R$ this is maximized when $k\coloneqq e/\delta$.
Using $R=ep_n^2/\delta$ in~\eqref{six},
\[ 2n + \frac{p_n}{\log^2 p_n} + O\paren[\Big]{\frac{p_n}{\log^3 p_n}} + \log\rad(abc) \leq \log c . \]
By the prime number theorem and~\eqref{pnt} again,
\[ \frac{2p_n}{\log p_n} + \frac{3p_n}{\log^2 p_n} + O\paren[\Big]{\frac{p_n}{\log^3 p_n}} + \log\rad(abc) \leq \log c . \]
Rewriting in terms of $R$,
\[ \frac{2\sqrt{\delta R/e}}{\log \sqrt{\delta R/e}} + \frac{3\sqrt{\delta R/e}}{\log^2 \sqrt{\delta R/e}} + O\paren[\Big]{\frac{\sqrt{R}}{\log^3 R}} + \log\rad(abc) \leq \log c . \]
Simplifying,
\[ \frac{4\sqrt{\delta R/e}}{\log(\delta R/e)} + \frac{12\sqrt{\delta R/e}}{\log^2(\delta R/e)} + O\paren[\Big]{\frac{\sqrt{R}}{\log^3 R}} + \log\rad(abc) \leq \log c . \]
Using $1/(x+y)=1/x-y/x^2+O(x^{-3})$ as $x\to\infty$ this gives
\[ \frac{4\sqrt{\delta R/e}}{\log (R/2)} + \frac{(12-4\log(2\delta/e))\sqrt{\delta R/e}}{\log^2 R} + O\paren[\Big]{\frac{\sqrt{R}}{\log^3 R}} + \log\rad(abc) \leq \log c . \]
Using that $2\delta<e^4$ the second term on the left is positive, and so for sufficiently large~$R$ the middle two terms are necessarily positive.
Therefore for sufficiently large~$R$ this can be simplified to
\[ \frac{4\sqrt{\delta R/e}}{\log(R/2)} + \log\rad(abc) \leq \log c . \]
Using that $2\log c\leq R$ from~\eqref{abcfirst} and the increasing monotonicity of $\sqrt{R}/\log(R/2)$ for sufficiently large $R$, we finally achieve that
\[ \frac{4\sqrt{2(\delta/e)\log c}}{\log\log c} + \log\rad(abc) \leq \log c . \]
Taking the exponential, this proves the following theorem.
\begin{thm}
There are infinitely many\/ $abc$ triples satisfying
\[ \exp\paren[\bigg]{\frac{4\sqrt{2(\delta/e)\log c}}{\log\log c}} \rad(abc) \leq c . \]
\end{thm}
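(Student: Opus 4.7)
The plan is to invoke Lemma~\ref{abclem} and combine its two bounds to produce an inequality that forces $\log c-\log\rad(abc)$ to grow at the rate claimed. Since the first bound lower-bounds $c/\rad(abc)$ in terms of $n$ and $m$, while the second upper-bounds $\log c$ by the quantity $R$ appearing on the right of that bound, the first step is to reparametrize by eliminating $m$ in favour of $R$. Solving $2^{m-1}$ out of $R$ and substituting into the first bound turns Lemma~\ref{abclem} into a single inequality of the form $f(n,R)+\log\rad(abc)\leq\log c$ together with the constraint $2\log c\leq R$.

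Next I would massage $f(n,R)$ into a clean asymptotic form. Taking the logarithm of the first bound turns the products $\prod p_i$ and $\prod\log p_i$ into sums handled by Lemmas~\ref{loglem} and~\ref{logloglem}, while the prime number theorem in the form~\eqref{pnt} lets me eliminate $n$ in favour of $p_n$. After the dominant terms cancel, $f(n,R)$ reduces to $n\log(e\delta R/p_n^2)$ plus subdominant contributions of size $p_n/\log^2 p_n$ and smaller.

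The optimization step is then to choose $R$ as a function of $n$ so as to maximize $f(n,R)$ while keeping $R$ as slowly-growing as possible, since the second bound wants $R$ small. Writing $R\coloneqq kp_n^2$ for a constant $k>1/(e\delta)$ (so that the logarithm is positive) reduces the task to maximizing $\log(e\delta k)/\sqrt{k}$, and elementary calculus pins the optimum at $k=e/\delta$, where $\log(e\delta k)=2$. Substituting this choice back in and carefully expanding $1/\log(\delta R/e)$ in powers of $1/\log R$ (with the denominator of the leading term rewritten as $\log(R/2)$ so that the final substitution produces $\log\log c$ cleanly) yields an inequality of the shape
\[ \frac{4\sqrt{\delta R/e}}{\log(R/2)} + \log\rad(abc) \leq \log c. \]
Finally, since $R\geq 2\log c$ by the second bound and $\sqrt{R}/\log(R/2)$ is eventually increasing, I can replace $R$ by $2\log c$ on the left to obtain $4\sqrt{2(\delta/e)\log c}/\log\log c+\log\rad(abc)\leq\log c$, and exponentiating yields the theorem.

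The main obstacle is the careful bookkeeping through the cascade of asymptotic expansions: every application of the prime number theorem or of Lemmas~\ref{loglem}--\ref{logloglem} introduces a new error term that must be shown to remain dominated by the principal quantity $p_n/\log p_n\sim n$, and the coefficient $4\sqrt{2\delta/e}$ is quite sensitive to the clean optimum $k=e/\delta$ --- which is precisely what produces the leading factor of $4$ in front of $\sqrt{\delta R/e}$. Verifying also that the secondary term $(12-4\log(2\delta/e))\sqrt{\delta R/e}/\log^2 R$ is nonnegative (using $2\delta<e^4$) is essential for discarding it without reversing the inequality, and this is where the precise quantitative content of the argument resides.
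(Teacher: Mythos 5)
Your proposal reproduces the paper's own argument essentially step for step: eliminating $m$ in favour of $R$, reducing the logarithm of the first bound via Lemmas~\ref{loglem} and~\ref{logloglem} and the prime number theorem to $n\log(e\delta R/p_n^2)$ plus lower-order terms, optimizing with $R=kp_n^2$ at $k=e/\delta$, verifying the positivity of the secondary term via $2\delta<e^4$, and finally substituting $R=2\log c$ using monotonicity. The approach is correct and identical to the paper's.
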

Using the permissible value for $\delta$ derived by Rankin's bound in Corollary~\ref{hermlem},
the constant in the exponent becomes approximately $6.56338$.
As mentioned in Section~\ref{sec:hermite}, the best known upper bound on $\delta$ is $2e$,
meaning that the constant in the exponent would become $8$ if this upper bound was shown to be tight.

\section*{Acknowledgments}
The author would like to thank the reviewer for their detailed review
and useful feedback they provided on the first draft of this paper.

\bibliographystyle{siamplain}
\bibliography{references}

\printaddress

\newpage
\section*{Appendix}

\setcounter{section}{2}

\setcounter{theorem}{7}

\begin{modlemma}%
$\sum_{i=1}^n\log p_i = n\log p_n - n - p_n/\log^2 p_n + O(p_n/\log^3 p_n)$.
\end{modlemma}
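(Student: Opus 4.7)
My plan is to convert the sum into an integral via Abel summation and then compare the result to $p_n$ using the asymptotic expansion~\eqref{pnt}. The first step is to write $\log p_i = \log p_n - \int_{p_i}^{p_n}\d t/t$ and interchange the sum with the integral to obtain
\[ \sum_{i=1}^n \log p_i = n\log p_n - \int_3^{p_n}\frac{\pi(t)-1}{t}\d t , \]
where the $-1$ accounts for the prime~$2$, which is excluded from the $p_i$.

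Next I evaluate the integral to order $p_n/\log^3 p_n$ by substituting~\eqref{pnt} for $\pi(t)$ and handling each $\int_3^{p_n}\d t/\log^k t$ by repeated integration by parts via the identity $\int \d t/\log^k t = t/\log^k t + k\int \d t/\log^{k+1} t$. The tail integral $\int_3^{p_n}\d t/\log^4 t = O(p_n/\log^4 p_n)$ can be controlled by splitting the range at $\sqrt{p_n}$ and bounding each piece trivially. Adding up the contributions yields
\[ \int_3^{p_n}\frac{\pi(t)}{t}\d t = \frac{p_n}{\log p_n} + \frac{2p_n}{\log^2 p_n} + \frac{6p_n}{\log^3 p_n} + O\paren[\Big]{\frac{p_n}{\log^4 p_n}} . \]

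Finally I compare with $n = \pi(p_n)-1$, which by~\eqref{pnt} (the sharper PNT error term being much smaller than required here) equals $p_n/\log p_n + p_n/\log^2 p_n + 2p_n/\log^3 p_n + O(p_n/\log^4 p_n)$. Subtracting this from the previous display and absorbing the negligible correction $-\log(p_n/3)=O(\log p_n)$ coming from the $-1$ in $\pi(t)-1$ gives $\int_3^{p_n}(\pi(t)-1)/t\,\d t = n + p_n/\log^2 p_n + O(p_n/\log^3 p_n)$, which plugged into the first step yields the claim.

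I expect the only real difficulty to be careful bookkeeping of the coefficients of $p_n/\log^k p_n$ for $k\le 3$, so that the leading terms of the integral cancel against $n$ and the next order combines to produce exactly the $-p_n/\log^2 p_n$ term on the nose. None of the individual estimates is delicate, but off-by-one coefficient errors would be easy to make.
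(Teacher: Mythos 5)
Your argument is correct, but it takes a different route from the paper's. The paper never touches the sum directly: it inverts the relation $n=\li(p_n)+O(p_n/\log^4 p_n)$ to write $p_n=n\log p_n-n-p_n/\log^2 p_n+O(p_n/\log^3 p_n)$, and then invokes the Chebyshev-function form of the prime number theorem, $\sum_{p\leq x}\log p=x+O(x/\log^3 x)$, to replace $p_n$ on the left by $\sum_{i=1}^n\log p_i$. You instead apply partial summation to the sum itself, reducing everything to the single input $\pi(t)=\li(t)+O(t/\log^4 t)$ and an explicit evaluation of $\int_3^{p_n}(\pi(t)-1)t^{-1}\d t$; your coefficients ($1$, $2$, $6$ for the integral and $1$, $1$, $2$ for $n$, leaving $p_n/\log^2 p_n+4p_n/\log^3 p_n$ after subtraction) all check out, and the correction $-\log(p_n/3)$ is indeed negligible. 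Your approach is more self-contained in that it does not require quoting the $\vartheta(x)=x+O(x/\log^3 x)$ form of the theorem as a separate fact, and it runs in parallel with the paper's own proof of the companion estimate for $\sum_{i=1}^n\log\log p_i$, which also proceeds by Abel summation; the paper's route is shorter because it delegates the partial-summation work to the standard equivalence between $\pi$ and $\vartheta$. The only point worth making explicit in a final write-up is the uniformity of the expansion of $\pi(t)$ over the whole range $[3,p_n]$ (trivial near the lower endpoint, and handled by the split at $\sqrt{p_n}$ elsewhere), which you have already flagged.
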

\begin{proof}
Let $x\coloneqq p_n$, so the prime number theorem (with error term) gives $n=\li(x)+O(x/\log^4 x)$.
Rearranging the
asymptotic expansion of the logarithmic integral~\eqref{pnt} gives
\begin{align*}
x &= n\log x - \frac{x}{\log x} - \frac{2x}{\log^2 x} + O\paren[\Big]{\frac{x}{\log^3 x}} \\
&= n\log x - n - \frac{x}{\log^2 x} + O\paren[\Big]{\frac{x}{\log^3 x}} .
\end{align*}
An alternate form of the prime number theorem is $x=\sum_{p\leq x}\log p+O(x/\log^3 x)$, so
the left-hand side may be replaced by $\sum_{i=1}^n \log p_i$ from which the result follows.
\end{proof}

\begin{modlemma}%
$\sum_{i=1}^n\log\log p_i = n\log\log p_n - p_n/\log^2 p_n + O(p_n/\log^3 p_n)$.
\end{modlemma}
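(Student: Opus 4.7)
My plan is to mirror the proof given above for Lemma~\ref{loglem}, but since no clean identity of the form $\sum_{p\leq x}\log\log p \approx g(x)+O(\cdots)$ is available to play the role that $\sum_{p\leq x}\log p = x+O(x/\log^3 x)$ played there, I would instead apply Abel summation directly.

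Setting $x\coloneqq p_n$, Abel summation applied to $f(t)=\log\log t$ against the odd-prime counting function $\pi(t)-1$ gives
\[ \sum_{i=1}^n \log\log p_i = n\log\log p_n - \int_2^x \frac{\pi(t)}{t\log t}\,\d t + O(\log\log p_n), \]
where the $O(\log\log p_n)$ absorbs the lower-boundary contribution from excluding the prime $2$. This correction is dominated by the target error $O(p_n/\log^3 p_n)$ and so is harmless.

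Next, I would use the quantitative prime number theorem $\pi(t)=\li(t)+O(t/\exp(C\log^{1/2} t))$ noted earlier in the paper to replace $\pi(t)$ by $\li(t)$ inside the integral; the resulting error is $O(x/\log^N x)$ for every $N$ and hence negligible. Substituting the expansion~\eqref{pnt} of $\li$ produces
\[ \frac{\li(t)}{t\log t} = \frac{1}{\log^2 t} + \frac{1}{\log^3 t} + O\paren[\Big]{\frac{1}{\log^4 t}}, \]
and the identity $(t/\log^k t)'=1/\log^k t - k/\log^{k+1}t$ lets one evaluate $\int_2^x 1/\log^k t\,\d t$ iteratively by integration by parts. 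In particular $\int_2^x 1/\log^2 t\,\d t = x/\log^2 x + O(x/\log^3 x)$ and $\int_2^x 1/\log^3 t\,\d t = O(x/\log^3 x)$, so
\[ \int_2^x \frac{\li(t)}{t\log t}\,\d t = \frac{x}{\log^2 x} + O\paren[\Big]{\frac{x}{\log^3 x}}, \]
which, combined with the Abel summation identity above, yields the claim.

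The main technical obstacle is the careful bookkeeping needed to confirm that every subleading term in the $\li$ expansion and every iterate of the integration by parts contributes only to the allowable error $O(p_n/\log^3 p_n)$; there is no conceptual difficulty, only the need to be disciplined with error terms.
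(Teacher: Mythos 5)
Your proposal is correct and follows essentially the same route as the paper: Abel summation with $f(t)=\log\log t$ against the odd-prime counting function, followed by the prime number theorem and an evaluation of $\int_2^x \d t/\log^2 t$ up to an $O(x/\log^3 x)$ error. The only (immaterial) difference is that you substitute $\li(t)$ with the strong error term and expand, whereas the paper uses $\pi(t)-1=t/\log t+O(t/\log^2 t)$ directly and identifies $\int_2^x \d t/\log^2 t=\li(x)-x/\log x+O(1)$.
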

\begin{proof}
By Abel's summation formula with $f(k)\coloneqq\log\log k$ and
\[ a_k\coloneqq \begin{cases}
1 & \text{if $k$ is an odd prime} \\
0 & \text{otherwise}
\end{cases} \]
for $k$ up to $x\coloneqq p_n$, we have
\[ \sum_{i=1}^n\log\log p_i = n\log\log x - \int_2^x\frac{\pi(t)-1}{t\log t}\d t . \]
We have $\pi(t)-1=t/\log t+O(t/\log^2 t)$ by the prime number theorem,
so that
\[ \int_2^x\frac{\pi(t)-1}{t\log t}\d t = \int_2^x\frac{\d t}{\log^2 t} + O\paren[\bigg]{\int_2^x\frac{\d t}{\log^3 t}} . \]
The first integral on the right works out to
\[ \int_2^x\frac{\d t}{\log^2 t} = \li(x) - \frac{x}{\log x} + O(1) = \frac{x}{\log^2 x} + O\paren[\Big]{\frac{x}{\log^3 x}} \]
by the asymptotic expansion of the logarithmic integral.  The second integral on the right can split in two (around $\sqrt x$) and then estimated by
\[ \int_2^{\sqrt x}\frac{\d t}{\log^3 t}+\int_{\sqrt x}^x\frac{\d t}{\log^3 t} \leq \frac{\sqrt x}{\log^3 2}+\frac{x-\sqrt x}{\log^3\sqrt{x}} = O\paren[\Big]{\frac{x}{\log^3 x}} . \]
Putting everything together gives
\[ \sum_{i=1}^n\log\log p_i = n\log\log x - \frac{x}{\log^2 x} + O\paren[\Big]{\frac{x}{\log^3 x}} . \tag*{\qedsymbol}
\]\let\qedsymbol\relax
\end{proof}

\end{document}